\newtheorem{thm}{Theorem}[section]
\newtheorem{lem}[thm]{Lemma}
\newtheorem{question}{Question}
\theoremstyle{definition}
\newtheorem{defi}[thm]{Definition}
\newtheorem{rem}[thm]{Remark}
\newcommand{\K}{{\mathbb K}}
\newcommand{\Z}{{\mathbb Z}}
\DeclareMathOperator{\arccosh}{arccosh}
\title{Some counterexamples in surface homology}
\author{Peter Buser, Eran Makover and Bjoern Muetzel}
\begin{document}

\maketitle

\begin{abstract}
We present four counterexamples in surface homology. The first example shows that even if the loops inducing a homology basis intersect each other at most once, they still may separate the surface into two parts. The other three examples show some difficulties in working with minimal homology bases. Introducing hyperbolic ribbon graphs we modify the examples so as to have a hyperbolic metric.\\
\\
Keywords: surface topology, minimal homology basis.\\
\\
Mathematics Subject Classifications (2020): 14J80, 54G20 and 55N10.
\end{abstract}

\section{Introduction}\label{sec:Int}

While working with surface homology in \cite{bmm} we encountered a number of pitfalls and obstacles that easily lend themselves to be overlooked. The aim of this note is to clarify some of them for which we could not find a reference. We present our findings in four examples answering four questions. While the first question is of a more general nature the other three concern minimal homology bases. In the following $S$ stands for a closed orientable surface of genus $g \geq 2$.  We denote by $H_1(S;\K)$ its first homology group with coefficients in a ring $\K$, where we are mainly interested in the cases $\K=\Z$ and $\K=\Z_2$. The questions are about bases of $H_1(S;\K)$ that are induced by simple closed curves. When length problems are addressed we assume that $S$ is endowed with a length metric which must be such that in any homotopy class of closed curves there is one with minimal length. The length is denoted by $\ell$.

\bigskip

The first question is about configurations that have features of a canonical homology basis.

\begin{question}\label{qu1}Given a set of $2g$ simple closed  curves on $S$ that pairwise intersect each other at most once and, furthermore, induce a basis of $H_1(S;\Z)$, if we cut $S$ open along these curves, will the resulting surface $S'$ be connected?  
\end{question}

Though this is true in the case of a canonical homology basis, the answer is no in general. \textbf{Example 1} provides a configuration with five simple closed curves on a genus three surface whose complement is simply connected. By a general lemma (\textbf{Lemma \ref{lem:rank_n}}) the configuration can be completed into a homology basis and there is, in fact, an explicit one that provides a counterexample to \textbf{Question \ref{qu1}}. This example and the lemma are presented in Section \ref{sec:Sep}.

Section \ref{sec:Min} is about minimal homology bases. There are several concepts of short or minimal homology bases on surfaces that have a length metric. For instance, one may request that the longest member in the basis must have minimal length, or that the sum of the lengths shall be minimal. Here we focus on \emph{successive minimality}. We shall consider two variant definitions. The first is due to Gromov \cite{gr}.

\begin{defi}[Successive minima I]\label{def:minI} A set of curves $\alpha_1,\alpha_2, \ldots, \alpha_m$ on $S$ is called a set of successive minima (more precisely, of type I) for the homology $H_1(S;\K)$ if it is the result of the following search procedure:		
\begin{itemize}
\item[1)] take $\alpha_1$ to be a shortest homologically non-trivial curve;
\item[2)] for $j = 2,\ldots, m$, take $\alpha_j$ to be a shortest curve whose homology class $[\alpha_j]$ is not a linear combination over $\K$ of the homology classes of $\alpha_1, \ldots , \alpha_{j-1}$.
\end{itemize}
\end{defi} 

As Gromov observes in \cite[sect.\ 5.1]{gr} successive minima have the very useful property that they are simple and pairwise intersect at most once. Furthermore, they are \emph{straight} in the sense that for any two points $p$, $q$ on the curve the distance from $p$ to $q$ along the curve is the same as the global distance from $p$ to $q$ on the surface (see also \cite[Lemma 2.2]{gu}). Hence, 

\begin{question}\label{qu2} Does there always exist a basis of $H_1(S;\K)$ of successive minima of type I?  
\end{question}

If $\K$ is a field, e.g. $\K = \Z_2$, the answer is yes because there are always up to $m = 2g = \text{rank\,} H_1(S; \K)$ successive minima and, since $H_1(S; \K)$ is a vector space over $\K$ they then form a basis. In the general case, however, the answer is again no. Counterexamples are given in \textbf{Examples 2} and \textbf{3}. There we construct two surfaces $S_G$ and $S_H$ of genus two with a Riemannian metric of variable curvature and a surface $\hat{S}_G$ of genus twelve with a hyperbolic metric together with the respective loops.

This stands in contradiction with a statement in  \cite[sect.\ 5.1]{gr} at the bottom of p.\ 45, where the aim, however, only is to find \emph{generating} successive minima, and the latter can indeed be achieved by extending the search procedure beyond $m = \text{rank\,} H_1(S; \K)$ (see \textbf{Remark \ref{rem:Straight}}).

The examples also answer, in the affirmative, the following question raised in \cite[sect.\ 3.2]{bkp} that stands in connection with the difference between homology over a ring and homology over a field:

\begin{question}\label{qu3} Is it possible that $2g$ simple closed curves that pairwise intersect each other at most once form a basis of $H_1(S;\Z_2)$ but not so for $H_1(S;\Z)$?  
\end{question}

As the search of successive minima does not always lead to a basis unless $\K$ is a field one may resort to the following alternative which remedies this:

\begin{defi}[Successive minima II]\label{def:minII} A set of curves $\alpha_1,\alpha_2, \ldots, \alpha_m$ on $S$ is called a set of successive minima of type II for the homology $H_1(S;\K)$ if it is the result of the following search procedure:					
\begin{itemize}
\item[1)] take $\alpha_1$ to be a shortest homologically non-trivial closed curve;
\item[2)] for $j = 2, \ldots , m$, take $\alpha_j$ to be a shortest closed curve with the property that the homology classes of $\alpha_1, \ldots , \alpha_{j-1}, \alpha_j$ can be extended to a basis of $H_1(S;\K)$.
\end{itemize}
\end{defi}
It is easily checked that whenever procedure I (with $m = 2g$) does lead to a basis then procedures I and II come to the same result. In particular, when $\K$ is a field the two definitions of successive minima coincide. 

One may now call a basis \emph{minimal} if it is a sequence of successive minima of type II. But ``how minimal'' is it from other points of view? In \cite[p.\ 1059]{gu} Guth defines the following partial order on the set of all bases of $H_1(S;\K)$: Let $A=\{[\alpha_1], \dots, [\alpha_{2g}]\}$, $B=\{[\beta_1], \dots, [\beta_{2g}]\}$ be bases and assume that the representing curves are of minimal lengths in their homology classes and that the sequences are ordered by length: $\ell(\alpha_1) \leq \dots \leq \ell(\alpha_{2g})$, $\ell(\beta_1) \leq \dots \leq \ell(\beta_{2g})$. We say that $A$ is \emph{smaller or equal} $B$ if
 \begin{equation*}
\ell(\alpha_k) \leq \ell(\beta_k), \quad \text{for all $k = 1, \dots, 2g$},
 \end{equation*}
and $A$ is \emph{strictly smaller} than $B$ if, in addition, $\ell(\alpha_j) < \ell(\beta_j)$ for some $j$. Based on this partial order we call $A$ \emph{locally minimal} if there is no strictly smaller basis and \emph{globally minimal} if it is smaller or equal $B$ for \emph{any} other basis $B$. Of course, this prompts

\begin{question}\label{qu4}Does a globally minimal basis always exist?  
\end{question}

It is not difficult to prove (see Section \ref{sec:Clos}) that if successive minima procedure I leads to a basis then this basis must be globally minimal. Hence, the answer to \textbf{Question \ref{qu4}} is yes if $\K$ is a field; unfortunately, in the general case it is once again no. A counterexample on a genus four surface is given in \textbf{Example 4}. In addition, the example can be modified so that it is hyperbolic, albeit with a greater genus. This is carried out in  \textbf{Remark \ref{rem:HypExpl}} respectively, \textbf{Example 3}, where we introduce a general construction of hyperbolic ribbon graphs.

A word about how we arrived at the configurations shown in \textit{Figures} 
\ref{fig:genus3}, \ref{fig:configurationsGH} and \ref{fig:G4}. They gradually emerged after many futile attempts to prove the opposite and by modifying numerous sketches through trial and error. It would be interesting to find a technique that makes it possible to construct further examples in a systematic way.

\section{Separating homology bases}\label{sec:Sep}

This section is about separation properties of homology bases on surfaces. \textbf{Example 1} will show that even if the curves of a basis are simple and pairwise intersect each other at most once their union set may be separating the surface. To put this example into perspective we first prove a lemma that goes in the other direction saying that as long as a system of simple closed curves is \emph{not} separating the system is always part of a homology basis. This is even so in cases where, like in \textit{Figure \ref{fig:genus3}}, the complement of the curve system is a simply connected domain. We could not find a reference for the lemma in the literature and we include a proof for the convenience of the reader.

In the following we call a \emph{partial basis} a set of homology classes that can be completed into a homology basis.

\begin{lem}\label{lem:rank_n}
Let $S$ be a closed orientable surface of genus $g \geq 1$ and let $a_1, \ldots, a_{n}$   be distinct curves on $S$ with the following properties
\begin{enumerate}
\item[1.)] $a_1, \ldots, a_{n}$ are simple closed curves,
\item[2.)] $S \smallsetminus (a_1 \cup \ldots \cup a_{n})$ is connected.
\end{enumerate}
Then $n \leq 2g$ and the homology classes $[a_1], \ldots, [a_{n}]$ form a partial basis of $H_1(S;\K)$.
\end{lem}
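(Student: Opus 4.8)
The plan is to prove the two assertions together by induction on $n$, using the fact that cutting along a non-separating simple closed curve reduces the genus by exactly one. The key topological input is that a simple closed curve $a$ on $S$ is non-separating if and only if $[a] \neq 0$ in $H_1(S;\K)$ (for $\K = \Z$ or $\K = \Z_2$), and that in that case $S$ cut along $a$ is a connected orientable surface $S_a$ of genus $g-1$ with two boundary circles; capping these off yields a closed surface $\widehat{S_a}$ of genus $g-1$. The homological bookkeeping is governed by the Mayer--Vietoris / long exact sequence of the pair, which shows that $H_1(S;\K)$ is an extension of $H_1(\widehat{S_a};\K)$ by the $\K$-span of $[a]$, and in particular that a partial basis of $H_1(\widehat{S_a};\K)$ together with $[a]$ gives a partial basis of $H_1(S;\K)$.

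First I would set up the induction. For $n=1$: since $S \smallsetminus a_1$ is connected, $a_1$ is non-separating, hence $[a_1] \neq 0$, and since $H_1(S;\K)$ is free of rank $2g$ (for $\K=\Z$) respectively a vector space of dimension $2g$ (for $\K=\Z_2$) with the property that any element extending to a non-separating curve is primitive, $\{[a_1]\}$ is a partial basis; here one should note that $[a_1]$ being represented by a non-separating simple closed curve guarantees primitivity in the $\Z$ case, which is exactly what is needed to extend to a basis. For the inductive step, assume the statement for surfaces of genus $<g$ and for fewer than $n$ curves. Given $a_1,\ldots,a_n$ with connected complement, the subset $a_2,\ldots,a_n$ also has connected complement, and in particular $a_1$ is non-separating; cut along $a_1$ to get the closed genus $g-1$ surface $\widehat{S_{a_1}}$. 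The images of $a_2,\ldots,a_n$ descend to $n-1$ simple closed curves on $\widehat{S_{a_1}}$ whose complement is still connected (it is the complement of $a_1 \cup \cdots \cup a_n$ in $S$ with two discs glued in, hence connected), so by induction $n-1 \leq 2(g-1)$, giving $n \leq 2g$, and $[a_2],\ldots,[a_n]$ form a partial basis of $H_1(\widehat{S_{a_1}};\K)$. Pulling this back through the surjection $H_1(S;\K) \to H_1(\widehat{S_{a_1}};\K)$ and adjoining $[a_1]$, which generates the kernel, yields a partial basis $[a_1],\ldots,[a_n]$ of $H_1(S;\K)$.

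The step I expect to be the main obstacle is the careful handling of the homology exact sequence over a general ring $\K$, specifically the claim that $[a_1]$ can always be taken as the first element of a basis and that a partial basis upstairs on $\widehat{S_{a_1}}$ lifts to one on $S$. Over a field this is immediate from dimension count, but over $\Z$ one must verify that the relevant short exact sequence $0 \to \K\langle [a_1]\rangle \to H_1(S;\K) \to H_1(\widehat{S_{a_1}};\K) \to 0$ splits and that $[a_1]$ is part of a free basis --- equivalently that a non-separating simple closed curve represents a primitive class. I would prove the primitivity directly: if $[a_1] = k\gamma$ with $k \geq 2$ then the mod-$k$ intersection pairing would force every class to have intersection number divisible by $k$ with $[a_1]$, contradicting the existence of a curve meeting $a_1$ transversally once (which exists precisely because $a_1$ is non-separating, e.g. by taking an arc in $S_{a_1}$ joining the two boundary circles and closing it up). With primitivity in hand the splitting is automatic, and the induction goes through uniformly for $\K = \Z$ and $\K = \Z_2$.
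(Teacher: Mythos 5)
Your approach is genuinely different from the paper's: you propose an induction on $n$ by cutting along $a_1$, capping off to get a closed surface $\widehat{S_{a_1}}$ of genus $g-1$, and transporting the problem there; the paper instead cuts along \emph{all} curves simultaneously, views $a_1\cup\cdots\cup a_n$ as a graph $\mathcal G$ (with vertices at the intersection points), uses that a regular neighbourhood deformation retracts onto $\mathcal G$, and builds a basis via a spanning tree. Unfortunately, your route has a fatal gap. The lemma does \emph{not} assume that the $a_i$ are pairwise disjoint --- and indeed the motivating \textbf{Example 1} has curves that pairwise intersect once. If some $a_i$ ($i\geq 2$) meets $a_1$, then cutting $S$ along $a_1$ turns $a_i$ into a collection of arcs with endpoints on the two boundary circles of $S_{a_1}$, not into a simple closed curve; after capping, those endpoints sit inside the attached discs and there is no canonical way to close the arcs up without changing homology classes. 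So the sentence ``the images of $a_2,\ldots,a_n$ descend to $n-1$ simple closed curves on $\widehat{S_{a_1}}$'' is simply false in general, and the induction does not get off the ground precisely in the cases the lemma is designed for.

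There is a secondary inaccuracy in the homological bookkeeping even in the disjoint case: the kernel of the natural surjection $H_1(S;\Z)\to H_1(\widehat{S_{a_1}};\Z)$ has rank $2$, not $1$ (the ranks are $2g$ and $2g-2$). Your short exact sequence $0\to\K\langle[a_1]\rangle\to H_1(S;\K)\to H_1(\widehat{S_{a_1}};\K)\to 0$ should instead involve $H_1(S_{a_1};\K)$ (the uncapped cut surface, of rank $2g-1$), with $[a_1]$ identified with a boundary class; from there one must still argue that a basis of $H_1(\widehat{S_{a_1}})$ together with $[a_1]$ sits as a direct summand inside $H_1(S)$. This is fixable, but the disjointness assumption is not. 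The primitivity argument you give for the base case $n=1$ is fine, and the overall strategy would prove the lemma in the special case of pairwise disjoint simple closed curves with connected complement; for the general statement you need something like the paper's graph-theoretic argument that treats all intersections at once.
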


We sketch a proof. All arguments are standard.

\begin{proof} 

We assume $\K = \Z$ and argue for general $\K$ at the end. Without loss of generality we may assume that the system of curves is \emph{maximal} in the sense that it cannot be extended to a system $a_1, \ldots, a_{n}, a_{n+1}$ satisfying 1.) and 2.). Then, cutting $S$ open along $a_1, \ldots, a_{n}$ we obtain a topological sphere $S'$ with $m$ holes (i.e.\ a surface of signature $(0;m)$) for some $m \geq 1$. In turn, $S$ is the quotient  
\begin{equation*}
S = S'/(\text{mod pasting})
\end{equation*}
``pasting'' meaning the reverse of the cutting. Note that any pair of points $p_1, p_2$ on the boundary $\partial S'$ of $S'$ that come together in this pasting must belong to the \emph{same} connected component of $\partial S'$ for otherwise we could draw a simple curve $a_{n + 1}$ from $p_1$ to $p_2$ on $S'$ and $a_1, \ldots, a_{n}, a_{n+1}$ on $S$ would still satisfy 1.) and 2.) contradicting the maximality.

The argument is via graphs. For this the curves are homotoped so that they pairwise intersect in at most finitely many points and $S$ is viewed as polyhedral surface on which $a_1, \ldots, a_{n}$ \emph{are edge paths that pairwise intersect only in vertices}. Note that this may be carried out such that conditions 1.) and 2.) are still satisfied.

For any connected component $c$ of $\partial S'$  we let $F_c$ be a small tubular neighbourhood of $c$ in $S'$, ``small'' meaning that all $F_c$ are annuli and pairwise disjoint. For each $c$ we set 
\begin{equation*}
S_c = F_c/(\text{mod pasting}).
\end{equation*}
This represents $S$ as an $m$-holed sphere $S^0$ to which $m$ surfaces $S_c, S_{c'}, \ldots$ are attached along the holes. Since the latter are orientable the first homology group of $S$ is the direct sum of the first homology groups of $S_c, S_{c'}, \ldots$ (to see this introduce canonical generators on the latter). Therefore we can inspect each $S_c$ individually and hence, we may assume without loss of generality that $m=1$.   Let
\begin{equation*}
\mathcal{G} = c/(\text{mod pasting})
\end{equation*}
be the connected graph obtained by restricting the pasting to $c$. As a point set  $\mathcal{G}$ is the same as $a_1 \cup \ldots \cup a_{n}$. Furthermore, $a_1, \ldots, a_{n}$ are naturally identified as cycles of $\mathcal{G}$. 

Since $\mathcal{G}$ is a deformation retract of $S_c$ there is a natural isomorphism $\mathbf{j} : H_1(S_c;\Z) \to H_1(\mathcal{G};\Z)$ that acts as the identity on the homology classes of $a_1 , \ldots,  a_{n}$. 

It remains to construct a basis of $H_1(\mathcal{G};\Z)$ via a spanning tree e.g. \cite[sect.\ 2.1.5]{st}. For this we delete from each cycle $a_k$  some small arc $e_k$, $k=1,\ldots,n$, ``small'' meaning that the arc lies on some edge of the graph and does not meet the endpoints of this edge.  As the $a_k$ intersect only in vertices the $e_k$ are pairwise distinct and the resulting graph $\mathcal{G}'$ is connected. If $\mathcal{G}'$ still has cycles we delete successively  further small arcs $f_1, \ldots f_q$ belonging to cycles $b_1, \ldots, b_q$ until the resulting graph $\mathcal{G}''$ is a tree. Then the homology classes of $a_1, \ldots, a_{n}, b_1, \ldots, b_q$   form a basis of $H_1(\mathcal{G};\Z)$. Via the isomorphism $\mathbf{j}$ it is identified with a basis of $H_1(S_c;\Z)$ and $n +q = 2g$. Finally, $a_1, \ldots, a_{n}, b_1, \ldots, b_q$ also induce a basis of $H_1(S;\Z)$ given that $S$ is orientable and $S_c$ is obtained from $S$ by removing a disk.

This proves the lemma for $\K = \Z$, and for the general case it then holds as well because for orientable surfaces one has $H_1(S;\K) = H_1(S;\Z) \otimes \K$.
\end{proof}

\begin{figure}[!ht]
\SetLabels
(.19*.90) $S_E$\\
(.27*.77) $\beta_1$\\
(.27*.77) $\beta_1$\\
(.25*.27) $\beta_2$\\
(.43*.30) $\,\beta_3$\\
(.20*.18) $\gamma$\\
(.41*.44) $\delta$\\
(.69*.83) $1$\\
(.69*.90) $\delta$\\
(.61*.73) $1'$\\
(.59*.78) $\delta$\\
(.64*.80) $\,\,\,2$\\
(.63*.87) $\beta_1$\\
(.58*.30) $2'$\\
(.55*.30) $\beta_1$\\
(.58*.64) $3$\\
(.55*.67) $\gamma$\\
(.68*.13) $\,\,\,3'$\\
(.68*.07) $\,\,\,\gamma$\\
(.56*.55) $\,\,\,4$\\
(.53*.55) $\beta_3$\\
(.79*.80) $4'$\\
(.80*.87) $\,\,\,\beta_3$\\
(.56*.41) $\,\,\,5$\\
(.53*.41) $\,\,\delta$\\
(.61*.23) $5'$\\
(.58*.18) $\,\,\,\delta$\\
(.64*.16) $\,\,6$\\
(.63*.11) $\beta_3$\\
(.87*.41) $\,\,6'$\\
(.90*.41) $\,\,\,\beta_3$\\
(.75*.13) $7$\\
(.75*.06) $\delta$\\
(.83*.24) $7'$\\
(.84*.18) $\,\,\,\delta$\\
(.78*.16) $\,\,\,\,8$\\
(.80*.10) $\,\,\beta_2$\\
(.86*.64) $8'$\\
(.89*.67) $\,\beta_2$\\
(.85*.32) $\,\,\,\,9$\\
(.88*.31) $\,\,\,\gamma$\\
(.75*.83) $\,9'$\\
(.75*.91) $\gamma$\\
(.87*.55) $\,\,10$\\
(.90*.54) $\,\,\delta$\\
(.82*.73) $\,\,10'$\\
(.85*.78) $\delta$\\
\endSetLabels
\AffixLabels{%
\centerline{%
\includegraphics[height=7cm,width=14cm]{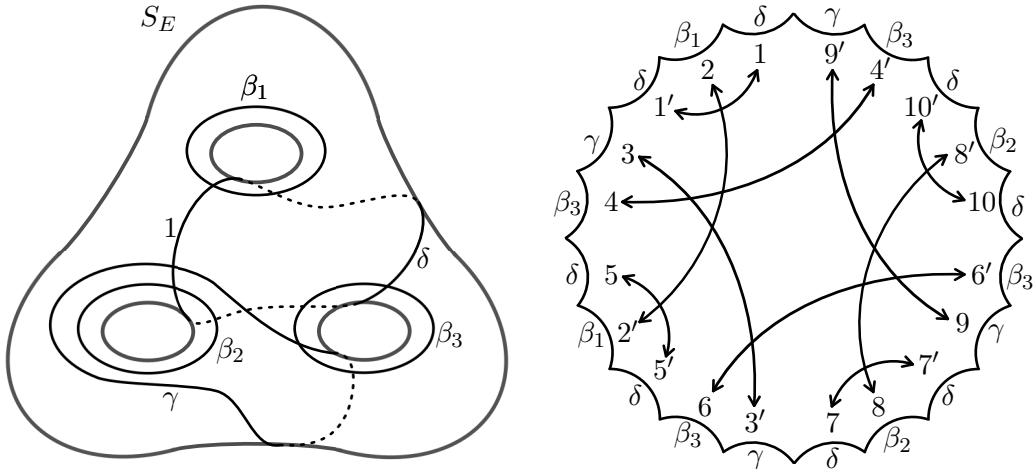}
}}
\vspace{-12pt}
\caption{The genus 3 surface $S_E$ cut open into a non canonical fundamental polygon along $\beta_1, \beta_2, \beta_3, \gamma$ and $\delta$.}
\label{fig:genus3}
\end{figure}

A special situation for the preceding lemma is given when the complement of the curves is simply connected. One may ask oneself whether under such a circumstance the curve system actually \emph{is} already a homology basis. The question is closely related to \textbf{Question \ref{qu1}} and the next example is a counterexample to both.

\bigskip
\textbf{Example 1.} Let $S_E$ be the surface of genus 3 depicted in \textit{Figure \ref{fig:genus3}}. Let $\beta_1, \beta_2, \beta_3,\gamma ,\delta$ be the curves shown there. The complement of the five curves $\beta_1, \beta_2, \beta_3,\gamma,\delta$ is a polygon domain with consecutive sides 
\[
1, 2, 1', 3, 4, 5, 2', 5', 6, 3', 7, 8, 7', 9, 6', 10, 8', 10', 4', 9'
\]
and the surface is obtained from it by pasting sides $k$ and $k'$ together for $k =1,\ldots,10$. 
Hence, the five curves are simple, they pairwise intersect each other in at most one point and their complement is simply connected.

We thus have an instance of \textbf{Lemma \ref{lem:rank_n}} stating that we can complete $\beta_1, \beta_2, \beta_3,\gamma,\delta$ into a homology basis. Explicitly, we take a simple closed curve $\alpha_1$ that intersects $\beta_1$ once and is disjoint from the other four. We let its orientation be such that the couple $\alpha_1$, $\beta_1$ is positively oriented. On the fundamental domain on the right hand side $\alpha_1$ then appears as simple arc that goes from side $2$ to side $2'$. To see that this indeed yields a basis we note that with respect to the canonical homology basis $\alpha_1, \beta_1, \alpha_2, \beta_2, \alpha_3, \beta_3$ (the $\alpha_i$ are not shown in \textit{Figure \ref{fig:genus3}}, they are analogous to those in \textit{Figure \ref{fig:G4}}) the curves $\gamma$, $\delta$ are given as follows:
\begin{equation*}
\gamma = \beta_2 + \alpha_3, \quad \delta = \alpha_1 - \alpha_2 + \alpha_3,
\end{equation*}
which in turn yields
\begin{equation*}
\alpha_2 = \alpha_1 - \beta_2 + \gamma - \delta, \quad \alpha_3 = \gamma-\beta_2.
\end{equation*}
The curves in the basis $\alpha_1, \beta_1, \beta_2, \beta_3,\gamma,\delta$ are simple, pairwise intersect at most once and separate the surface into two parts.

We would like to add that in genus 2 a similar example does not exist, see \textbf{Remark \ref{rem:nearcang2}}.

\section{Minimal homology bases}\label{sec:Min}

In the first part of this section we answer to \textbf{Questions \ref{qu2}} and \textbf{\ref{qu3}}. \textbf{Example 2} provides two surfaces of genus two with variable curvature, \textbf{Example 3} is an adaption of the first surface in genus twelve that has a hyperbolic metric.

\begin{figure}[!h]

\vspace{-6pt}

\SetLabels
(.01*.48) $1$\\
(.16*.48) $1'$\\
(.08*.42) $\,\alpha$\\
(.18*.62) $\beta$\\
(.25*.79) $\,\gamma$\\
(.20*.48) $2$\\
(.29*.48) $\,3$\\
(.27*.41) $\,\delta$\\
(.38*.48) $2'$\\
(.48*.48) $\,3'$\\
(.45*.41) $\,\,\delta$\\
(.25*.01) $\mathcal{G}$\\
(.52*.48) $1$\\
(.70*.48) $1'$\\
(.60*.42) $\,\alpha$\\
(.71*.62) $\beta$\\
(.77*.79) $\gamma$\\
(.59*.60) $2$\\
(.69*.37) $3$\\
(.62*.62) $\,\,\,\delta$\\
(.89*.48) $\,3'$\\
(.99*.48) $\,2'$\\
(.97*.41) $\delta$\\
(.75*.01) $\mathcal{H}$\\
\endSetLabels
\AffixLabels{%
\centerline{%
\includegraphics[height=5cm,width=15.5cm]{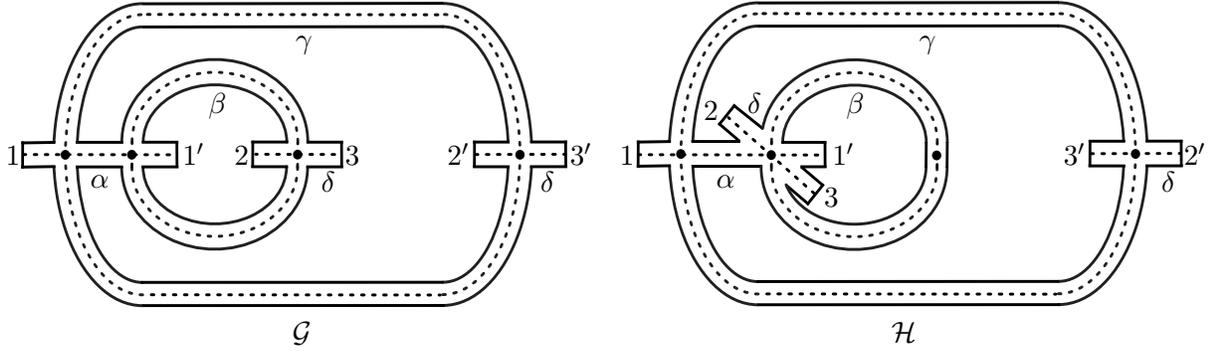}}}
\caption{The ribbon graphs $\mathcal{G}$ and $\mathcal{H}$. Side $k$ is pasted to side $k'$ for $k=1, 2, 3$.}
\label{fig:flatribbon}
\end{figure}

\textbf{Example 2.} We construct the examples using the ribbon graphs $\mathcal{G}$, $\mathcal{H}$ obtained from the plane domains shown in \textit{Figure \ref{fig:flatribbon}} by pasting sides $k$ and $k'$ together for $k = 1,2,3$ in an orientable way. Topologically these are orientable surfaces  of genus 2 with two boundary components. The four simple closed curves (dashed) $\alpha, \beta, \gamma, \delta$ on $\mathcal{G}$ respectively, $\mathcal{H}$ form the soul with $\alpha$ going from side $1$ to side $1'$, $\delta$ from $2$ via $3, 3'$ to $2'$, $\beta$ running along the inner and $\gamma$ along the outer circuit. In $\mathcal{H}$ we have added a ``dummy'' vertex to $\beta$ so that here too $\beta$ has two edges. By attaching topological disks along the boundary components of $\mathcal{G}$ and $\mathcal{H}$ we obtain closed surfaces $S_G$ and $S_H$ of genus 2 marked with the homotopy (not homology) classes of $\alpha, \beta, \gamma, \delta$. \textit{Figure \ref{fig:configurationsGH}} depicts these curves on $S_G$ and $S_H$ in another form. 

\begin{figure}[!b]
\vspace{-4pt}
\SetLabels
(.31*.67) $\alpha_1$\\
(.44*.81) $\beta_1$\\
(.68*.67) $\,\alpha_2$\\
(.55*.81) $\beta_2$\\
(.05*.18) $\alpha$\\
(.17*.33) $\,\beta$\\
(.29*.36) $\gamma$\\
(.26*.25) $\,\,\delta_G$\\
(.57*.18) $\alpha$\\
(.71*.30) $\,\beta$\\
(.81*.36) $\gamma$\\
(.77*.29) $\delta_H$\\
(.24*.02) $S_G$\\
(.76*.02) $S_H$\\
\endSetLabels
\AffixLabels{%
\centerline{%
\includegraphics[height=7.2cm,width=15cm]{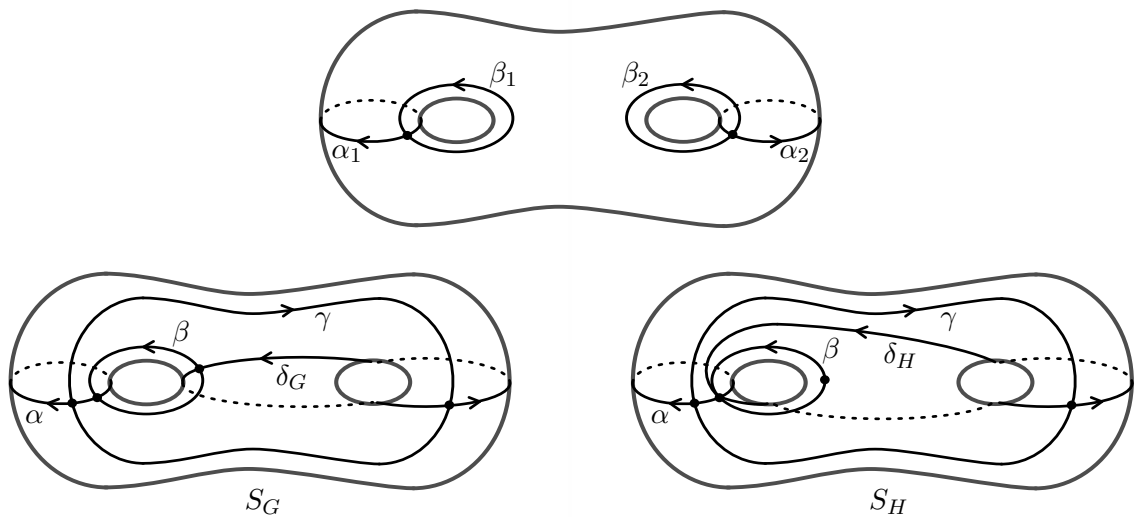}
}}
\caption{The two surfaces $S_G$ and $S_H$ of genus 2 with the configurations from $\mathcal{G}$ and $\mathcal{H}$, respectively. }
\label{fig:configurationsGH}
\end{figure}

The two configurations are different in the sense that there is no homeomorphism from $S_G$ to $S_H$ that sends the homotopy classes of the four curves on $S_G$ to those on $S_H$ 
(on $S_H$ there is a triple of curves with pairwise algebraic intersection number 1, on $S_G$ there is no such triple). Hence, we have two slightly different examples.

We now verify that $\alpha, \beta, \gamma, \delta$ are linearly independent but not generators in the $\Z$ homology (confounding curves with their homology classes). For this we use the canonical homology basis $\alpha_1, \beta_1, \alpha_2, \beta_2$ shown on  top in \textit{Figure \ref{fig:configurationsGH}}. We get the following where we write $\delta$ as $\delta_G$ and $\delta_H$, for distinction:

 \begin{equation*}
 \begin{alignedat}{2}
\rule{5.4em}{0pt}&\rule{10em}{0pt}&\rule{6em}{0pt}&\rule{11em}{0pt}\\[-20pt]
\text{on $S_G$:}\rule{2em}{0pt}\alpha &= \alpha_1 & \text{on $S_H$:}\rule{2em}{0pt}\alpha &= \alpha_1 \\
\beta &= \beta_1 & \beta &= \beta_1\\
\gamma &= -\beta_1 -\beta_2 & \gamma &= -\beta_1 -\beta_2\\
\delta_G &= -\alpha_1+2\alpha_2+\beta_2 & \delta_H &= -\alpha_1+\beta_1+2\alpha_2+\beta_2.
\end{alignedat}
 \end{equation*}%

In return this yields 

 \begin{equation*}
 \begin{alignedat}{2}
 \rule{5.4em}{0pt}&\rule{10em}{0pt}&\rule{6em}{0pt}&\rule{11em}{0pt}\\[-20pt]
\alpha_2 &= \tfrac{1}{2}(\alpha+\beta+\gamma + \delta_G) & \alpha_2 &= \tfrac{1}{2}(\alpha+\gamma+\delta_H)\\
\beta_2 &= -\beta-\gamma & \beta_2 &= -\beta-\gamma.\\
\end{alignedat}
 \end{equation*}%

We deduce from this that in both cases the four curves generate a subgroup of the homology group of index 2.

It remains to define a metric on $S_G$ and $S_H$ such that $\alpha, \beta, \gamma, \delta$ are successive minima in the sense of \textbf{Definition \ref{def:minI}}. For this we observe that in the combinatorial graphs formed by the curves $\alpha, \beta, \gamma, \delta$ (the souls of $\mathcal{G}$ and $\mathcal{H}$) these curves  seen as edge paths all have combinatorial length 2 while any other non-trivial closed edge path has length greater or equal 3. We now take flat Riemannian metrics on $\mathcal{G}$ and $\mathcal{H}$ such that the curves become closed geodesics of length 2 and the boundary curves are at small distance (one may think of pasting together paper bands). With these metrics on $\mathcal{G}$ and $\mathcal{H}$ the curves $\alpha, \beta, \gamma, \delta$ are systoles and the same holds for them on $S_G$ and $S_H$ 
 if we take for each boundary component $b$ of $\mathcal{G}$ and $\mathcal{H}$ a Riemannian metric on the attached disk $D_b$ such that the boundary of $D_b$ has the same length as $b$ and is straight (see \textbf{Remark \ref{rem:Straight}}) in $D_b$.
 
We point out that, more precisely, $\alpha, \beta, \gamma, \delta$ are the unique systoles. Hence, any sequence of successive minima is a permutation of $\alpha, \beta, \gamma, \delta$.
\\

\begin{figure}[!t]
\SetLabels
(.49*.95) $1$\\
(.49*.01) $\,1'$\\
(.48*.60) $\alpha$\\
(.29*.45) $2'$\\
(.29*.95) $3'$\\
(.27*.77) $\,\,\delta$\\
(.17*.70) $4'$\\
(.61*.70) $4$\\
(.35*.68) $\gamma$\\
(.38*.26) $5$\\
(.81*.26) $\,\,\,5'$\\
(.67*.23) $\beta$\\
(.70*.01) $2$\\
(.70*.50) $3$\\
(.69*.37) $\delta$\\
(.48*.37) $\,s$\\
(.55*.24) $s$\\
(.56*.38) $t$\\
(.27*.23) $\tilde{S}_G$\\
\endSetLabels
\AffixLabels{%
\centerline{%
\includegraphics[height=7.5cm,width=10.7cm]{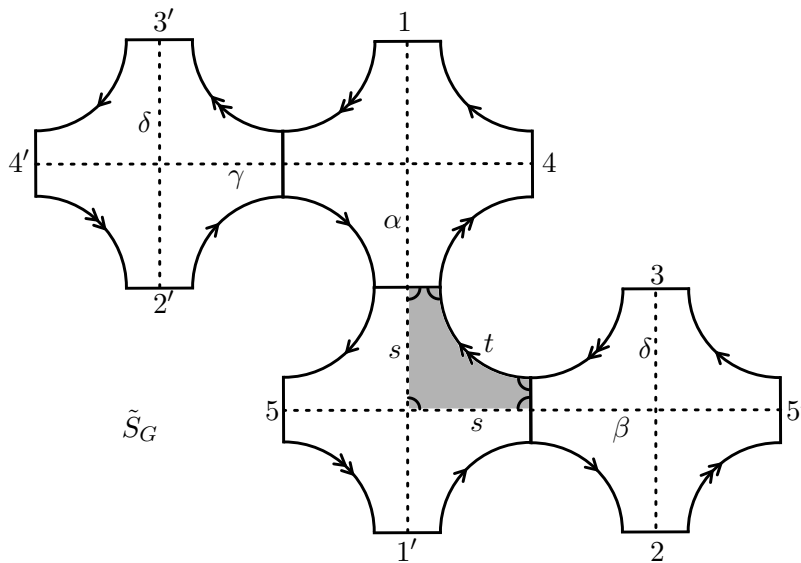}
}}
\caption{Four octagons pasted together forming a hyperbolic surface $\tilde{S}_G$ of genus 2 with two geodesic boundary components of length $8t$.}
\label{fig:octagons}
\end{figure}

\textbf{Example 3} It is also possible to carry out the construction with a hyperbolic metric, however with higher genus. We carry this out for the first configuration.

In the first place we put a hyperbolic metric on the ribbon graph $\mathcal{G}$ such that the two boundary components are closed geodesics. To this end we assign to each vertex of the graph a hyperbolic geodesic octagon as shown in \textit{Figure \ref{fig:octagons}}. It consists of four identical right angled geodesic pentagons (gray shaded area in the figure) with two adjacent sides of length $s$ to be determined below and opposite side of length $t$. By hyperbolic trigonometry

 \begin{equation*}\label{Eq:pentagon}
\cosh(t) = \sinh^2(s).
 \end{equation*}

For our purposes it is practical to choose $s$ such that $s=t$. This leads to the equation $\sinh^2(s) = \cosh(s)$ with $\cosh(s)$ a solution to the equation $y^2-1 = y$. The approximate value is

 \begin{equation*}\label{Eq:sequalt}%
s = t = 1.061 \ldots .
 \end{equation*}%

We paste four copies of the octagon together by the pattern shown in the figure. This pattern corresponds to the one in \textit{Figure \ref{fig:flatribbon}} and so from the topological point of view the resulting surface $\tilde{S}_G$ is identical with the ribbon graph $\mathcal{G}$. (To see this add in \textit{Figure \ref{fig:flatribbon}} two additional cuts on graph $\mathcal{G}$ corresponding to 4 and 5 so as to get the same pasting pattern of octagons as in \textit{Figure \ref{fig:octagons}}.)

We now endow $\tilde{S}_G$ with the hyperbolic metric inherited from the octagons. In this metric the two boundary components are closed geodesics of length $8t = 8s$ and the four curves $\alpha, \beta, \gamma, \delta$ are closed geodesics each of length $4s$. Furthermore, any non-trivial closed curve on $\tilde{S}_G$ not homotopic to $\alpha, \beta, \gamma, \delta$ (or their inverses) must cross at least four octagons and therefore has length bigger than $4t = 4s$. Hence, here too $\alpha, \beta, \gamma, \delta$ are the systoles and therefore the first successive minima.

\begin{figure}[!b]
\vspace{0pt}
\SetLabels
(.15*.75) $u_k$\\
(.25*.75) $v_k$\\
(.35*.75) $u_k'$\\
(.44*.75) $\,\,v_k'$\\
(.20*.55) $a_k$\\
(.41*.55) $\,\,b_k$\\
(.39*.71) $\,\,\,\vartheta$\\
(.59*.71) $\vartheta$\\
(.43*.33) $\,\,w$\\
(.56*.33) $w$\\
(.48*.02) $\tau$\\
(.52*.02) $\tau$\\
\endSetLabels
\AffixLabels{%
\centerline{%
\includegraphics[height=4.7cm,width=14cm]{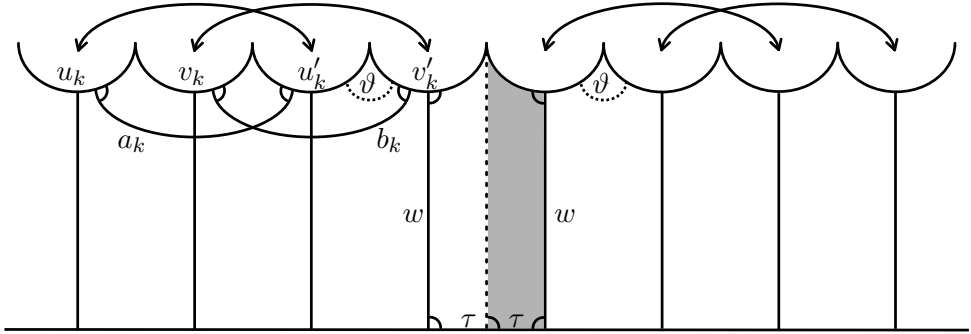}
}}
\caption{The surface $W$ of signature $(h;1)$ is a hyperbolic annulus with alternating side \newline identification.}
\label{fig:crown}
\end{figure}

To create a closed hyperbolic surface $\hat{S}_G$ out of $\tilde{S}_G$ where this property persists we attach a suitable hyperbolic surface $W$ of some signature $(h;1)$ along each of the two boundary geodesics of $\tilde{S}_G$ where the boundary of $W$ is a closed geodesic of the same length $8t$ and where the geometry must be such that the boundary collar has width $w > 2s$.

We construct $W$ as follows, beginning with a hyperbolic annulus or ``crown'' on which one boundary is a closed geodesic of length $8t$ and the other boundary is a regular polygon curve with $20$ sides of equal length whose interior angles at the vertices all are equal to

\begin{equation*}\label{Eq:delta}%
\vartheta = \frac{\pi}{10} \text{ \ \ (see \textit{Figure \ref{fig:crown}}).} 
 \end{equation*}

The crown consists of 40 trirectangles with acute angle $\vartheta/2$ and opposite sides of lengths $w$, $\tau$, where we choose $\tau = 8t/40$ (gray shaded area in \textit{Figure \ref{fig:crown}}). The width $w$ is determined by the formula
 \begin{equation*}
\sinh(w) \sinh(\tau) =\cos(\vartheta/2).
 \end{equation*}
Numerically we get
 \begin{equation*}
w = 2.234 \ldots .
 \end{equation*}
which is greater than $2t$, as requested. We now pairwise identify the sides of the 20-gon via alternating sides identification in the same way as the sides of a canonical fundamental polygon of an orientable surface are pasted together: the sides are consecutively labelled $\ldots, u_k, v_k, u'_k, v'_k, \ldots$ for $k=1,\ldots,5$, and then for each $k$ side $u_k$ is pasted to $u'_k$ and $v_k$ to $v'_k$. All vertices are pasted together to the same point and since the angle sum is $20 \vartheta = 2\pi$ the result is a hyperbolic surface $W$ of signature $(5;1)$ with a boundary geodesic of length $8t$ and a boundary collar of width $w$.

For any $k$ there is a common perpendicular $a_k$ from side $u_k$ of the crown to side $u'_k$, and there is a similar perpendicular $b_k$ from $v_k$ to $v'_k$. By the symmetries of the crown their endpoints come together in the pasting and the $a_k$, $b_k$ become smooth closed geodesics on $W$. As in the case of a canonical fundamental polygon $a_1, b_1, \ldots, a_5, b_5$ form a canonical homology basis on $W$.

To compute the lengths of these geodesics we observe (see \textit{Figure \ref{fig:crown}}) that for each $a_k$ there is a right angled pentagon with side of length $\ell(a_k)/2$ and opposite sides  of length $2\tau$ and $w$. The pentagon formula gives
\begin{equation*}
\cosh(\tfrac{1}{2} \ell(a_k)) = \sinh(2 \tau) \sinh(w)
\end{equation*}
and the same holds for $b_k$. Numerically we get
\begin{equation*}
\ell(a_k) = \ell(b_k) =2.656 \ldots .
\end{equation*}
From the pasting pattern of the crown it follows that any simple closed curve on $W$ that is not homotopic to a point or homotopic to the boundary must contain at least two arcs that connect one of the sides of the 20-gon with one of the 19 non adjacent perpendiculars of length $w$. The shortest such connectors are the halves of the $a_k$ and $b_k$. Consequently, the $a_k$ and $b_k$ are the systoles of $W$.

We now paste two copies of $W$ along the boundary geodesics of $\tilde{S}_G$ to get a closed hyperbolic surface $\hat{S}_G$ of genus 12. Since the systoles $a_k, b_k$ on the two copies of $W$ are shorter than the systoles $\alpha, \beta, \gamma, \delta$ on $\tilde{S}_G$ and since the boundary collar of $W$ is larger than half the lengths of the latter it follows that the search of successive minima first outputs the minimal homology bases on the copies of $W$ and then the four systoles of $\tilde{S}_G$ at which moment the rank of $H_1(\hat{S}_G;\Z)$ is reached and the procedure halts. The curves thus found generate, like in \textbf{Example 2}, a subgroup of $H_1(\hat{S}_G;\Z)$ of index 2. 

Over $\Z_2$ or any other field the curves in the three examples form homology bases and the examples thus say yes to \textbf{Question \ref{qu3}}.

\bigskip

In the second part of this section we deal with \textbf{Question \ref{qu4}} that came up in connection with the strength of the search procedure in \textbf{Definition \ref{def:minII}}. Does it produce or, asking more generally, must there always exist a basis $\alpha_1, \dots, \alpha_{2g}$, ordered by increasing length which is minimal to the point that for any other homology basis $\beta_1, \dots, \beta_{2g}$ ordered by increasing length we have $\ell(\alpha_k) \leq \ell(\beta_k)$, $k=1,\dots, 2g$. The next example shows that this request for minimality is too strong, in general.

\begin{figure}[!ht]
\vspace{12pt}
\SetLabels
(.17*.39) $\gamma$\\
(.20*.58) $\gamma$\\
(.13*.14) $\delta$\\
(.23*.67) $\delta$\\
(.28*.39)  $\,\,\,\delta$\\
(.14*.50) $\delta$\\
(.37*.95) $\,\,S_K$\\
(.37*.43) $\alpha_1$\\
(.62*.43) $\,\,\,\alpha_2$\\
(.52*.90) $\,\,\alpha_3$\\
(.47*.10) $\alpha_4$\\
(.45*.50) $\,\,\,\beta_1$\\
(.53*.50) $\,\beta_2$\\
(.44*.75) $\,\,\,\beta_3$\\
(.54*.25) $\,\,\beta_4$\\
(.76*.70) $\lambda$\\
(.86*.38) $\lambda$\\
(.84*.50) $\mu$\\
(.94*.41) $\mu$\\
\endSetLabels
\AffixLabels{%
\centerline{%
\includegraphics[height=5.5cm,width=15.5cm]{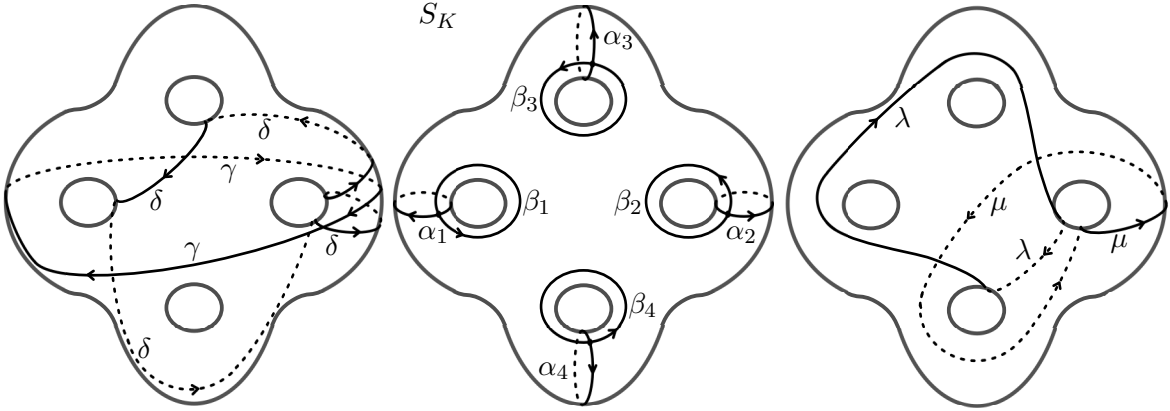}
}}
\vspace{12pt}
\caption{Curves on the genus 4 surface $S_K$. }
\label{fig:G4separately}
\end{figure}


\textbf{Example 4.} The idea is similar to the preceding ones but now we take a surface $S_K$ of genus 4. The configuration on $S_K$ is an enhanced form of the configuration in $S_G$ as of \textit{Figure \ref{fig:configurationsGH}} consisting of 10 closed curves and is shown in \textit{Figure \ref{fig:G4}}. For better understanding its constituents are represented separately in \textit{Figure \ref{fig:G4separately}}.

The curves $\alpha_1, \beta_1, \ldots, \alpha_4, \beta_4$ depicted in the middle form a canonical basis of $H_1(S_K; \Z)$; of these the pair $\alpha_2$, $\beta_2$ is not used in the configuration. With respect to this basis the curves $\gamma, \delta, \lambda, \mu$ have the following representations, where we identify curves with their homology classes.

\begin{align*}
\gamma &= -\beta_1 - \beta_2\\
\delta &= -\alpha_1 + 2 \alpha_2 + \beta_2 + \alpha_3 + \beta_4\\
\lambda &= -\beta_1 -\alpha_2 -\beta_3 + \alpha_4\\
\mu &= \alpha_2 + \beta_2 + \beta_4
\end{align*}

For better overview we rename the curves $\alpha_1, \gamma, \delta, \alpha_3, \beta_3, \alpha_4, \beta_4, \beta_1, \lambda, \mu$ in this order $u_1, \ldots , u_{10}$. They then have the following coordinates with respect to the basis vectors $\alpha_1, \beta_1, \ldots, \alpha_4, \beta_4$:
 \begin{equation*}
\begin{array}{cccccrrrrrrrr}
u_1&{\!\!\!=\!\!\!}&\alpha_1&\!\!\!: & \rule{1em}{0pt}& 1&0&0&0&\phantom{-}0&0&\phantom{-}0&\phantom{-}0\\
u_2&{\!\!\!=\!\!\!}&\gamma&\!\!\!: & \rule{1em}{0pt}& 0&-1&0&-1&0&0&0&0\\
u_3&{\!\!\!=\!\!\!}&\delta&\!\!\!: & \rule{1em}{0pt}& -1&0&2&1&1&0&0&1\\
u_4&{\!\!\!=\!\!\!}&\alpha_3&\!\!\!: & \rule{1em}{0pt}& 0&0&0&0&1&0&0&0\\
u_5&{\!\!\!=\!\!\!}&\beta_3&\!\!\!: & \rule{1em}{0pt}& 0&0&0&0&0&1&0&0\\
u_6&{\!\!\!=\!\!\!}&\alpha_4&\!\!\!: & \rule{1em}{0pt}& 0&0&0&0&0&0&1&0\\
u_7&{\!\!\!=\!\!\!}&\beta_4&\!\!\!: & \rule{1em}{0pt}& 0&0&0&0&0&0&0&1\\
u_8&{\!\!\!=\!\!\!}&\beta_1&\!\!\!: & \rule{1em}{0pt}& 0&1&0&0&0&0&0&0\\
u_9&{\!\!\!=\!\!\!}&\lambda&\!\!\!: & \rule{1em}{0pt}& 0&-1&-1&0&0&-1&1&0\\
u_{10}&{\!\!\!=\!\!\!}&\mu&\!\!\!: & \rule{1em}{0pt}& 0&0&1&1&0&0&0&1
\end{array}
 \end{equation*}
%
\begin{figure}[!t]
\SetLabels
(.36*.95) $S_K$\\
(.17*.49) $\,\,\alpha_1$\\
(.53*.89) $\,\,\alpha_3$\\
(.46*.10) $\,\,\alpha_4$\\
(.40*.57) $\beta_1$\\
(.42*.71) $\beta_3$\\
(.58*.24) $\beta_4$\\
(.36*.70) $\lambda$\\
(.52*.50) $\mu$\\
(.44*.52) $\,\,\,\delta$\\
(.67*.33) $\,\,\,\delta$\\
(.53*.41) $\gamma$\\
\endSetLabels
\AffixLabels{%
\centerline{%
\includegraphics[height=8cm,width=10.3cm]{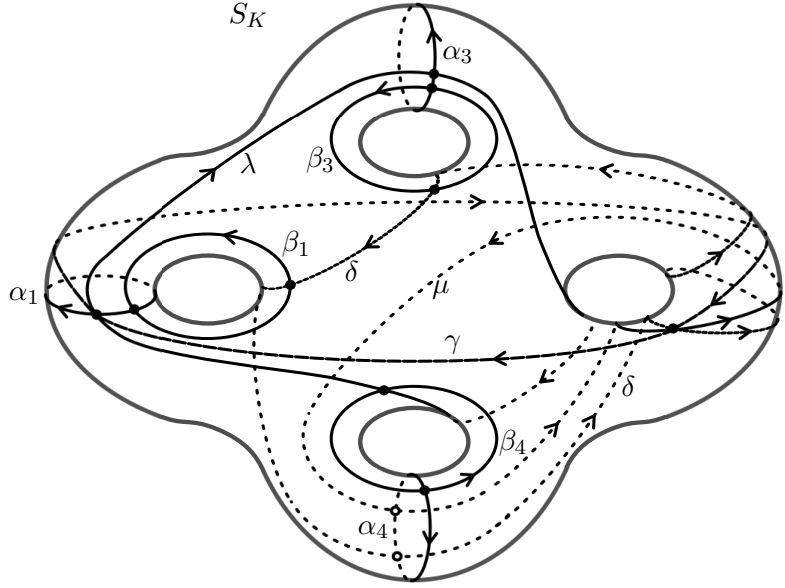}
}}
\caption{Graph $\mathcal{K}$ on $S_K$ formed by the curves $u_1, \ldots, u_{10}$.}
\label{fig:G4}
\end{figure}

To see which combinations of $u_1, \ldots, u_{10}$ form bases we shall denote by $\text{det}[u_{i_1}, \ldots, u_{i_8}]$ the determinant of the matrix formed by the rows of coordinates of $u_{i_1}, \ldots, u_{i_8}$. We then have, in particular,

 \begin{equation*}
\begin{array}{lcc}
\text{det}[u_1, \ldots, u_7, u_8]&{\!\!\!=\!\!\!}&2\\
\text{det}[u_1, \ldots, u_7, u_9]&{\!\!\!=\!\!\!}&-3\\
\text{det}[u_1, \ldots, u_7, u_{10}]&{\!\!\!=\!\!\!}&-1\\
\text{det}[u_2, \ldots, u_8, u_9]&{\!\!\!=\!\!\!}&1.
\end{array}
 \end{equation*}

Hence, the first two combinations are bases of respectively, index 2 and 3 subgroups, the other two combinations are full bases.

We shall now introduce a Riemannian metric on $S_K$ such that the length of $u_1, \ldots , u_{10}$ are the first lines in the length spectrum, i.e.\ such that $u_1, \ldots , u_{10}$ are shortest curves in their homotopy classes, the lengths are increasing,
 \begin{equation*}
\ell(u_1) < \ell(u_2) < \dots < \ell(u_{10}),
 \end{equation*}
and for any other homotopy class the length is greater than $ \ell(u_{10})$. Since $\ell(u_1), \ldots, \ell(u_{10})$ are also the first lines in the homological length spectrum and since $u_1, \ldots, u_{7}$ are extendable to a basis the search of a minimal basis with respect to \textbf{Definition \ref{def:minII}} successively selects $u_1, \ldots, u_{7}$, then rejects $u_8, u_9$ and selects as last element $u_{10}$. However, the result is not minimal as $\{u_2, \ldots, u_8, u_9\}$ is another basis and $\ell(u_9)<\ell(u_{10})$.

For the metric we proceed as earlier by first assigning lengths to the edges of $\mathcal{K}$. Let $\iota_k = 1 +k/200$, for $k=1,\ldots,10$. Any $u_k$ consist of $m_k$ edges with $m_k \in \{2,3,4\}$ and we let $\iota_k/m_k$ be the lengths of these edges. Hence, for each $k=1,\ldots,10$, all edges of $u_k$ have the same length and $\ell(u_k) = \iota_k$. 

Inspecting \textit{Figure \ref{fig:G4}} we see that any closed edge path on $\mathcal{K}$ (with no edge followed by its inverse) that is different from $u_1, \ldots, u_{10}$ has at least three edges, and there are only two such loops, say $v$, $v'$ that consist of exactly three edges. Both of them have their edges on $ \alpha_4$, $\mu$ and $\delta$ and are of length $\ell(v) = \ell(v') > \frac{1}{3} + \frac{1}{2} + \frac{1}{4}$. Any further loop $w$ on $\mathcal{K}$ has at least two edges of lengths $>  \frac{1}{3}$ and satisfies therefore $\ell(w) > \ell(v)$.

The required length spectrum conditions are thus satisfied for the graph $\mathcal{K}$ and we can proceed as in \textbf{Examples 2} and \textbf{3} by first defining a flat metric on a thin ribbon graph around $\mathcal{K}$ in which the $u_k$ are geodesics of length $\iota_k$ and then extend the metric with variable curvature to all of $S_K$ such that still $\ell(u_1), \ldots , \ell(u_{10})$ are the first lines in the length spectrum.

\section{Closing remarks}\label{sec:Clos}

We add a few remarks and observations.

\begin{rem}[Hyperbolic examples]\label{rem:HypExpl}  The construction of a hyperbolic ribbon graph around graph $\mathcal{G}$ that lead to \textbf{Example 3} seems to be new. It can be generalised to any finite graph whose vertices have valencies $\geq 3$. The only thing different is that for a vertex of valency $v$, instead of taking a right angled hyperbolic octagon with four thin arms as in \textit{Figure \ref{fig:octagons}} one has to use an analogous right angled $2v$-gon with $v$ thin arms. The crowns that must be attached along the boundaries to get closed surfaces may become arbitrarily large but the lengths of the geodesics $a_k$, $b_k$ (\textit{Figure \ref{fig:crown}}) are bounded remaining close to the value $2 \arccosh(2) = 2.633 \dots$ The latter arises as a limit when the parameter $w$ goes to infinity and at the same time the parameter $\tau$ goes to 0. Hence, the arguments used in \textbf{Example 3} can be carried out in general.

In this way one can show, in particular, that \textit{there also exist hyperbolic surfaces without globally minimal homology bases}.
\end{rem}

\begin{rem}[Sufficient condition for global minimum]\label{rem:SufCon} \emph{If successive minima procedure I yields a basis then this basis is globally minimal.}

\bigskip

This is part of the next slightly more general lemma. The condition is not  necessary, though, as is detailed in \textbf{Remark \ref{rem:24Rev}}.
\end{rem}

\begin{lem}\label{lem:MinProcI} Let $\gamma_1, \dots, \gamma_{2g} \in H_1(S;\K)$ be obtained by successive minima procedure I. Then for any sequence of linearly independent $\delta_1, \dots \delta_{2g} \in H_1(S;\K)$ ordered by increasing length we have

 \begin{equation*}
\ell(\gamma_k) \leq \ell(\delta_k), \quad k = 1, \dots, {2g}.
 \end{equation*}
 
\end{lem}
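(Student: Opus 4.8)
The plan is to argue by contradiction using the defining minimality property of the procedure-I curves $\gamma_1,\dots,\gamma_{2g}$ together with a dimension count. Suppose the conclusion fails, so there is a smallest index $k$ with $\ell(\gamma_k) > \ell(\delta_k)$. Since the $\delta_i$ are ordered by increasing length, each of $\delta_1,\dots,\delta_k$ has length $\leq \ell(\delta_k) < \ell(\gamma_k)$. The key point is that by the construction in procedure I, $\gamma_k$ is a \emph{shortest} class whose homology class is not a $\K$-linear combination of $[\gamma_1],\dots,[\gamma_{k-1}]$; equivalently, every class strictly shorter than $\gamma_k$ lies in the submodule $M_{k-1} := \langle [\gamma_1],\dots,[\gamma_{k-1}]\rangle_{\K}$ generated by the earlier curves. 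Hence all of $\delta_1,\dots,\delta_k$ lie in $M_{k-1}$.

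This is the crux: I would then derive a contradiction from the fact that $M_{k-1}$ is generated by $k-1$ elements but is supposed to contain the $k$ linearly independent elements $\delta_1,\dots,\delta_k$. When $\K$ is a field this is immediate ($\dim M_{k-1}\leq k-1 < k$). For a general ring $\K$ — and in particular $\K=\Z$ — one must be slightly more careful, since ``linearly independent'' and ``spanning'' interact less rigidly; but over $\Z$, or over any commutative ring in which the invariant basis number property and the appropriate rank inequality hold (which is the case for $H_1(S;\K)=\Z^{2g}\otimes\K$ here), a submodule generated by $k-1$ elements cannot contain $k$ elements that are linearly independent over $\K$. I would phrase this as: the rank (in the sense of the maximal number of $\K$-linearly independent elements) of a module generated by $k-1$ elements is at most $k-1$, which contradicts the $\K$-linear independence of $\delta_1,\dots,\delta_k$.

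Therefore no such index $k$ exists and $\ell(\gamma_k)\leq\ell(\delta_k)$ for all $k=1,\dots,2g$. As a consequence, taking the $\delta_i$ to be (the classes of minimal-length representatives of) any homology basis $\beta_1,\dots,\beta_{2g}$ ordered by length yields Remark~\ref{rem:SufCon}: a basis produced by procedure~I is globally minimal.

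The main obstacle I anticipate is purely the ring-theoretic step — making precise and correctly invoking the statement that over $\K$ a module with $k-1$ generators has no $k$ linearly independent elements. One has to be careful that procedure~I's stopping condition ``$[\alpha_j]$ is not a $\K$-linear combination of $[\alpha_1],\dots,[\alpha_{j-1}]$'' is exactly what forces every shorter class into $M_{j-1}$; here one also uses that $\gamma_k$ is a shortest such class, so a class strictly shorter than $\gamma_k$ that were independent of $M_{k-1}$ would have been selected instead, contradicting the construction. Everything else — the induction on $k$, the ordering of the $\delta_i$, the passage to the global-minimality corollary — is routine.
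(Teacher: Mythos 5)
Your proof is correct and follows essentially the same strategy as the paper's: suppose $\ell(\gamma_k)>\ell(\delta_k)$ at some index $k$, use the minimality in procedure~I to force $\delta_1,\dots,\delta_k$ into $\mathrm{span}_\K\{\gamma_1,\dots,\gamma_{k-1}\}$, and contradict linear independence. The paper simply asserts that the span of $k-1$ elements cannot contain $k$ linearly independent ones without comment, so the ring-theoretic care you flag (valid over any commutative ring, e.g.\ by McCoy's theorem) is a small but worthwhile addition rather than a departure.
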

\begin{proof} 
Since $\gamma_1$ is a systole the inequality holds for $k=1$. Now assume on the contrary that  there is $j \in \{2, \dots, 2g\}$ with $\ell(\delta_j) < \ell(\gamma_j)$. Under this assumption we must have (with $\text{span}$ over $\K$)
 \begin{equation*}
\delta_i \in \text{span}\{\gamma_1, \dots, \gamma_{j-1}\}, \; \text{for} \; i=1, \dots, j, \tag{*}
 \end{equation*}
for if some $\delta_i$ were not linear combination of $\gamma_1, \dots, \gamma_{j-1}$, then by the minimal procedure and our assumption $\ell(\delta_i) \geq \ell(\gamma_j) > \ell(\delta_j)$, where $i \leq j$, in contradiction to the increasing lengths of $\delta_1, \dots, \delta_{2g}$. Hence, the assumption $\ell(\delta_j) < \ell(\gamma_j)$ implies (*), but (*) cannot hold because $\delta_1, \dots, \delta_j$  are linearly independent.
\end{proof}

\begin{rem}[Necessary condition for a global minimum]\label{rem:StrHom} \emph{If a globally minimal basis exists then successive minima procedure II yields it.} 
\end{rem}

This is an immediate consequence of the definition of the procedure.

\begin{rem}[Local minimum]\label{rem:LocMin} \emph{Successive minima II always yields a local minimum.} This too is an immediate consequence of the definition of the procedure.
\end{rem}

\begin{rem}[Example 2 revisited]\label{rem:24Rev} Graph $\mathcal{G}$ on surface $S_G$ in \textbf{Example  2} contains a closed edge path  $\eta_G$ of edge length 4 that is homotopic on $S_G$ to the curve $\alpha_2$ from the canonical homology basis $\{\alpha, \beta, \alpha_2, \beta_2\}$  (see \textit{Figures \ref{fig:configurationsGH}} and \textit{\ref{fig:edgepaths}}).
It follows that the cycles $\alpha, \beta, \gamma, \eta_G$ induce a basis of $H_1(S_G;\Z)$. Slightly perturbing the lengths we can achieve that $\alpha, \beta, \gamma, \delta_G, \eta_G$ have the unique five first lengths in the length spectrum of $S_G$. The sequence $\alpha, \beta, \gamma, \eta_G$ then forms a globally minimal basis of $H_1(S_G;\Z)$. But successive minima procedure I finds as fourth curve $\delta_G$. A similar remark holds for graph $\mathcal{H}$ on $S_H$ with a closed edge path $\eta_H$ of edge length 3. 

As a conclusion we have that  \emph{an existing global minimum is not always detected by successive minima procedure I.}

\end{rem}

\begin{figure}[!h]
\SetLabels
(.08*.35) $\alpha$\\
(.16*.28) $\beta$\\
(.39*.20) $\gamma$\\
(.36*.70) $\eta_G$\\
(.56*.35) $\,\alpha$\\
(.64*.28) $\,\beta$\\
(.87*.20) $\,\,\gamma$\\
(.83*.70) $\eta_H$\\
(.27*.02) $S_G$\\
(.74*.02) $S_H$\\
\endSetLabels
\AffixLabels{%
\centerline{%
\includegraphics[width=14cm]{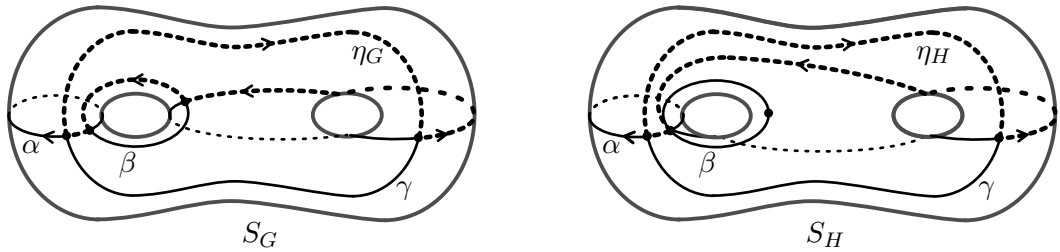}
}}
\caption{Edge paths of length 4 and 3 (thick dashed lines) on the embedded graphs $\mathcal{G}$ and $\mathcal{H}$. }
\label{fig:edgepaths}
\end{figure}

\begin{rem}[Nearly canonical bases in genus 2]\label{rem:nearcang2} We may call a homology basis on a genus $g$ surface $S$ \emph{nearly canonical} if it is induced by simple closed curves that pairwise intersect at most once. In \textbf{Example 1} we gave a counterexample in genus 3 to the question whether the complement of a nearly canonical basis has to be connected. Here we add that in genus 2 a counterexample is not possible.

A way to see this (the only way we know of) is to look at all configurations, up to equivalence, of nearly canonical bases. It turns out (we state this here without proof) that in genus 2 there are only four: the canonical basis and the three configurations in \textit{Figure \ref{fig:non-canonical}}.

\begin{figure}[!t]
\SetLabels
(.13*.72) $\,\,a_1$\\
(.04*.67) $\,a_2$\\
(.18*.79) $\,\,a_3$\\
(.24*.62) $\,a_4$\\
(.16*.49) $\,a_1$\\
(.16*.08) $\,a_1$\\
(.08*.43) $\,a_4$\\
(.24*.14) $a_4$\\
(.05*.29) $\,a_2$\\
(.27*.29) $a_2$\\
(.09*.14) $a_3$\\
(.24*.43) $a_3$\\
(.47*.72) $\,\,a_1$\\
(.38*.68) $\,a_2$\\
(.53*.79) $a_3$\\
(.57*.66) $\,a_4$\\
(.50*.49) $a_1$\\
(.50*.09) $a_1$\\
(.43*.47) $a_2$\\
(.56*.11) $\,a_2$\\
(.38*.35) $\,\,\,\,a_3$\\
(.43*.11) $\,a_3$\\
(.56*.46) $a_3$\\
(.60*.23) $\,a_3$\\
(.39*.22) $a_4$\\
(.60*.35) $\,a_4$\\
(.79*.79) $\,a_1$\\
(.72*.68) $\,a_2$\\
(.86*.69) $a_3$\\
(.91*.66) $\,a_4$\\
(.84*.49) $a_1$\\
(.84*.09) $a_1$\\
(.71*.40) $\,\,a_1$\\
(.95*.18) $\,\,\,a_1$\\
(.77*.48) $a_2$\\
(.90*.09) $a_2$\\
(.69*.29) $a_3$\\
(.77*.10) $a_3$\\
(.98*.28) $a_3$\\
(.89*.48) $\,\,a_3$\\
(.72*.17) $a_4$\\
(.95*.40) $\,\,a_4$\\
\endSetLabels
\AffixLabels{%
\centerline{%
\includegraphics[height=8cm,width=16cm]{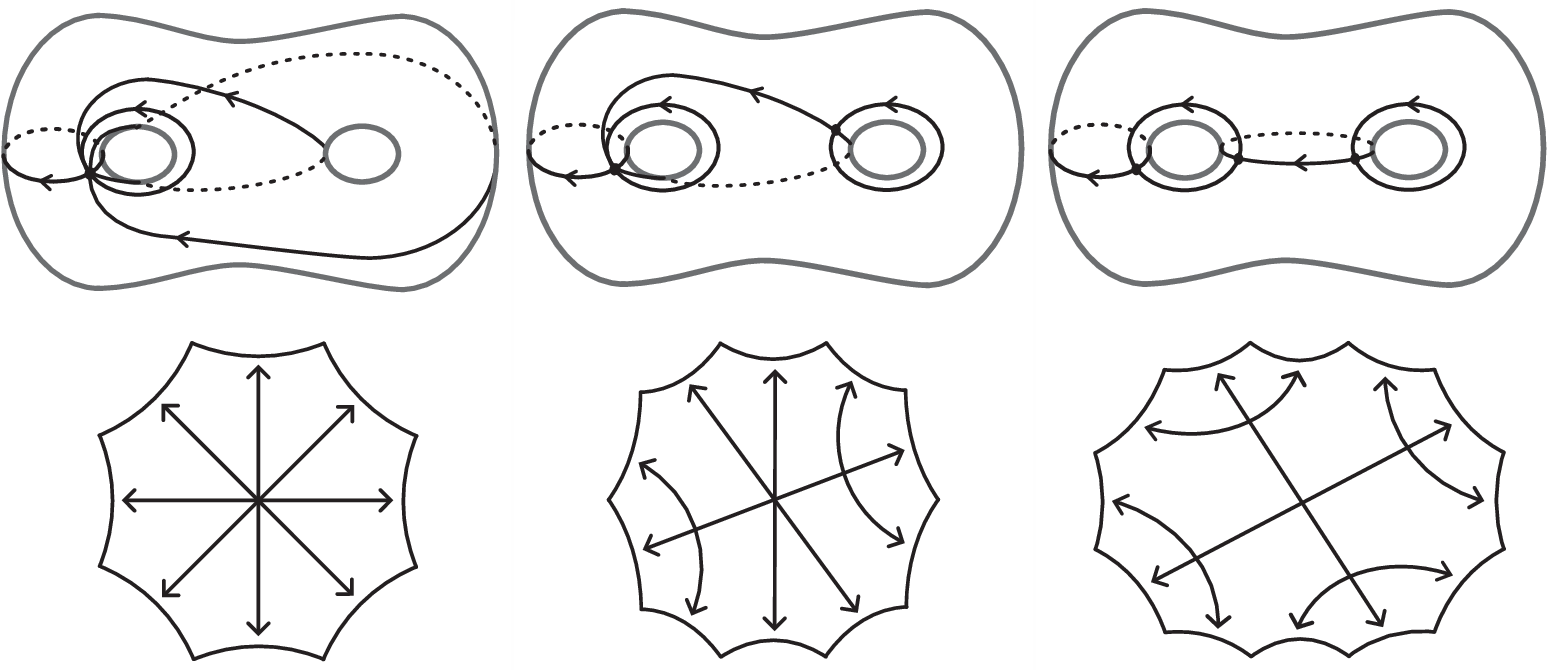}
}}
\caption{The three non-canonical homology bases with pairwise at most one intersection. }
\label{fig:non-canonical}
\end{figure}
For all of them the complement is a connected fundamental domain.

\end{rem}

\begin{rem}[Straight generators]\label{rem:Straight} We recall that a closed curve $c$ on $S$ is called \emph{straight} if for any pair of points $p$, $q$ on $c$ the distance from $p$ to $q$ on $S$ is equal to the length of the shortest arc on $c$ from $p$ to $q$.

The observation in \cite[sect.\ 5.1]{gr} mentioned in the Introduction can be formulated as follows: If $L$ is a subgroup of $H_1(S;\K)$ and $c$ a shortest closed curve on $S$ whose homology class $[c]$ is not in $L$ then $c$ is simple and straight. The reason for this property is that otherwise $c$ may be written as sum of shorter curves $c = c_1 + c_2$ whose homology classes thus both lie in $L$ which is not possible because $[c] \notin L$.  As a consequence successive minima I produces straight curves. The procedure can be continued until the resulting curves generate all of $H_1(S;\K)$. Hence, \emph{$H_1(S;\K)$ can always be generated by straight closed curves.}

The question whether there always exist straight homology \emph{bases}, however, seems to be open.
\end{rem}

\section*{Acknowledgment}

The authors would like to thank Sebastian Baader for helpful discussions and remarks and the referee of the article for his/her helpful comments.

\bigskip

\noindent Peter Buser \\
\noindent Department of Mathematics, Ecole Polytechnique F\'ed\'erale de Lausanne\\
\noindent Station 8, 1015 Lausanne, Switzerland\\
\noindent e-mail: \textit{peter.buser@epfl.ch}\\
\\
\\
\noindent Eran Makover\\
\noindent Department of Mathematics, Central Connecticut State University\\
\noindent 1615 Stanley Street, New Britain, CT 06050, USA\\
\noindent e-mail: \textit{makovere@ccsu.edu}\\
\\
\\
\noindent Bjoern Muetzel \\
\noindent Department of Mathematics, Eckerd College \\
\noindent 4200 54th avenue South, St. Petersburg, FL 33711, USA\\
\noindent e-mail: \textit{bjorn.mutzel@gmail.com}

\end{document}